\documentclass[12pt]{article}
\usepackage{graphicx,amsmath,amssymb,amsthm} 

\usepackage{xlop}                               %
\usepackage{tikz}                               %
\usepackage{float}                              %
\usepackage{xcolor}                             %
\usepackage{pstricks-add, pst-eucl}             %
\usetikzlibrary{positioning,shapes,fit,arrows}


\newtheorem{theorem}{Theorem}
\newtheorem{lemma}[theorem]{Lemma}
\newtheorem{corollary}[theorem]{Corollary}
\newtheorem{definition}[theorem]{Definition}

\newcommand{\cysp}{\mathrm{CySp}}
\newcommand{\Id}{\mathrm{Id}}

\title{Cyclic Group Spectra for Some Small Relation Algebras}
\author{Jeremy F.~Alm, Ashlee Bostic, Claire Chenault,\\ Kenyon Coleman, Chesney Culver  \\  Department of Mathematics, Lamar University, Beaumont, TX, USA}
\date{March 2024}

\begin{document}

\maketitle

\begin{abstract}

    The question of characterizing the (finite) representable relation algebras in a ``nice" way  is open. The class $\mathbf{RRA}$ is known to be not finitely axiomatizable in first-order logic. Nevertheless, it is conjectured that ``almost all'' finite relation algebras are representable. 

    All finite relation algebras with three or fewer atoms are representable.  So one may ask, Over what cardinalities of sets are they representable?  This question was answered completely by Andr\'eka and Maddux  (``Representations for small relation algebras,'' \emph{Notre Dame J. Form. Log.}, \textbf{35} (1994)); they determine the spectrum of every finite relation algebra with three or fewer atoms.

    In the present paper, we restrict attention to cyclic group representations, and completely determine the cyclic group spectrum for all seven symmetric integral relation algebras on three atoms. We find that in some instances, the spectrum and cyclic spectrum agree; in other instances, the spectra disagree for finitely many $n$; finally, for other instances, the spectra disagree for infinitely many $n$. The proofs employ constructions, SAT solvers, and the probabilistic method.

\end{abstract}



\section{Introduction}

Given a finite relation algebra, it is undecidable whether that algebra has a representation \cite{HH}.  Yet most ``small'' algebras are representable -- for example, all finite algebras on three or fewer atoms are representable -- and it is conjectured that ``almost all'' finite algebras are representable (see for example the problem list in \cite{HHbook}). A recent result by Koussas \cite{Koussas} is that the class of atom structures of finite integral symmetry relation algebras obeys a 0-1 law, lending credence (in the first author's opinion, anyway) to this conjecture. The 2004 survey by Maddux  gives an interesting history of the early work on representation questions \cite{Madd04}.  

For finite relation algebras known to be representable, one may ask, ``Over what cardinalities of sets are they representable?'' For finite  algebras with three or fewer atoms,  this question was answered completely by Andr\'eka and Maddux  \cite{Andreka}. 

In the study of the combinatorial complexity of finite relation algebras, the central organizing questions are as follows:

\noindent Given a finite algebra $A$,

\begin{enumerate}
    \item Is $A$ representable?
    \item If so, is it representable over a finite set?
    \item If so, what is the set of integers $n$ such that $A$ is representable over a set of order $n$?
\end{enumerate}

In light of these questions, we define the \emph{spectrum} of an algebra as follows:

\begin{definition}
    If $A$ is a finite relation algebra, then 

    \[
    \mathrm{Spec}(A) = \{ n \in \mathbb{Z} :  A \text{ has a square representation over $n$ points} \}
    \]
\end{definition}

It is usually extremely difficult to determine the spectrum of an algebra. Indeed, even finding $\min (\mathrm{Spec}(A))$ can be very difficult.  For example, relation algebra $32_{65}$ (see \cite{Maddux} for the numbering system), with diversity atoms $a$, $b$, and $c$ with $bbb$ and $ccc$ forbidden, has been well-studied; it has been known to be finitely representable since 2008 \cite{AMM}. The representation given there was over  $416,714,805,914$ points, employing the probabilistic method.  By using the Lovasz Local Lemma, this was improved (using the same underlying probability space) by Dodd and Hirsch to $63,432,274,896$ \cite{DH}. By constructing a more ``efficient'' probability space, this was greatly improved by the first author and his undergraduate student; by using randomized computer search over said space, they found a representation over $2^{13} =8192$ points (unpublished). The smallest known representation of $32_{65}$ is over $2^{10} = 1024$ points, which was found by the first author by combining randomized methods with painstaking \emph{ad hoc} manipulation \cite{Almetal}. The best we know as of this writing is as follows:

\[
26 \leq \min (\mathrm{Spec}(32_{65})) \leq 1024
\]

The lower bound was achieved by combining the use of a SAT solver with clever combinatorial insights provided by the graduate student coauthors on \cite{Almetal}.  

As the reader can see, the gap between 26 and 1024 is quite large, and is reminiscent of such lower-bound-upper-bound gaps in Ramsey theory. We will probably never know the true value of $\min (\mathrm{Spec}(32_{65}))$, let alone determining $\mathrm{Spec}(32_{65})$ completely.  These are challenging questions. 

Relation algebra $32_{65}$ admits a natural generalization, as follows. Define $A_n$ to be the finite symmetric integral algebra with diversity atoms $a$, $b_1$, \ldots, $b_n$ with all and only the ``all blue'' cycles $b_ib_jb_k$ forbidden. Define 

\[
    f(n) = \min\mathrm{Spec}(A_n).
\]
In \cite{AMM}, it was shown that $f(n) \leq \binom{15n^2}{n}$. Dodd and Hirsch improve this result  somewhat but do not give an explicit bound. Recently, $f(n)$ was shown to have polynomial growth \cite{Almetal}; in particular, we have that 
\[
    2n^{2} + 6n + 2 \leq f(n) \leq 2n^{6 + o(1)}.
\]

Some non-symmetric generalizations of $32_{65}$ were studied in \cite{Alm23}, leveraging a method originated by Comer \cite{comer83} and amplified in \cite{Alm23b}.   

In order to chip away at such intractable questions, it is helpful to narrow the scope of one's search. Several known representations of minimal size of ``small'' algebras  are over cyclic groups, so one might restrict one's attention to cyclic representations.  Indeed, that is the perspective of the present paper. We make the following definition.

\begin{definition}
    If $A$ is a finite relation algebra, then the cyclic spectrum of $A$ is
    \[
    \cysp(A) = \{ n \in \mathbb{Z} :  A \text{ has a  representation over } \mathbb{Z}/n\mathbb{Z} \}
    \]
\end{definition}

In this paper, we completely determine the cyclic group spectrum for all seven symmetric integral relation algebras on three atoms.  See Section \ref{sec:summary} for a summary of results. 

The algebras in this paper are named $1_7$, $2_7$,\ldots, $7_7$ by Maddux \cite{Maddux}.  Each has symmetric atoms $1'$, $a$, and $b$, and the distinct diversity cycles (up to equivalence) are $aaa$, $bbb$, $abb$, and $baa$. 

The seven algebras with mandatory diversity cycles are displayed in Table \ref{tab:cycles}.

\begin{table}[hbt]
    \centering
    \begin{tabular}{c|cccc}
          $1_7$  &  &  & $abb$ &  \\
          $2_7$  & $aaa$ &  & $abb$ &  \\
          $3_7$  &  & $bbb$ & $abb$ &  \\
          $4_7$  & $aaa$ & $bbb$ & $abb$ &  \\
          $5_7$  &  &  & $abb$ & $baa$ \\
          $6_7$  & $aaa$ &  & $abb$ & $baa$ \\
          $7_7$  & $aaa$ & $bbb$ & $abb$ & $baa$ \\
    \end{tabular}
    \caption{Mandatory diversity cycles for $1_7$ through $7_7$}
    \label{tab:cycles} 
\end{table}

\section{Relation algebra $1_7$}
\begin{theorem}
    Relation algebra $1_7$ is  representable over $\mathbb{Z}/n\mathbb{Z}$ for  $n=4$, and $\cysp (1_7) = \{4\}$.
\end{theorem}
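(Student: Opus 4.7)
The plan is to translate cyclic representability into additive combinatorics on $\mathbb{Z}/n\mathbb{Z}$, exhibit the representation at $n=4$ directly, and then rule out all other $n$ by combining the forbidden cycles.

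A cyclic representation of $1_7$ over $\mathbb{Z}/n\mathbb{Z}$ amounts to a partition of the nonzero elements into nonempty inverse-closed sets $A$ and $B$ assigned to atoms $a$ and $b$. Translating the mandatory and forbidden cycles of $1_7$, the conditions become $A \subseteq B + B$ (from $abb$ mandatory), $(A+A)\cap A = \emptyset$ (from $aaa$ forbidden), $(B+B)\cap B = \emptyset$ (from $bbb$ forbidden), and $(A+A)\cap B = \emptyset$ (from $baa$ forbidden).

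For existence, I take $A = \{2\}$ and $B = \{1,3\}$ in $\mathbb{Z}/4\mathbb{Z}$. Then $A + A = \{0\}$ and $B + B = \{0, 2\}$, so all three forbidden conditions are satisfied, and $2 = 1 + 1 \in B + B$ shows that the $abb$ cycle is realized. Hence $4 \in \cysp(1_7)$.

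For the converse, the two forbidden cycles $aaa$ and $baa$ combine to give $A + A \subseteq \{0\}$. Picking any $x \in A$, this yields $2x = 0$ and $x + y = 0$ for every $y \in A$, so $A$ contains exactly one element, which has order two; this forces $n$ to be even and $A = \{n/2\}$. Then $B$ consists of every nonzero element other than $n/2$. The case $n = 2$ is excluded because then $B$ is empty, so $n \geq 4$. If $n \geq 6$, then $1, 2 \in B$ but $1 + 1 = 2$ violates $(B + B) \cap B = \emptyset$. Thus $n = 4$ is the only possibility and $\cysp(1_7) = \{4\}$. No step here poses a real obstacle; the whole argument is driven by the severe constraint $A + A \subseteq \{0\}$ extracted from the two forbidden cycles involving $a$, which immediately pins $A$ down to a singleton.
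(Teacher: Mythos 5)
Your proof is correct, but the exclusion direction is genuinely different from the paper's. The paper simply invokes the Andr\'eka--Maddux determination that $\mathrm{Spec}(1_7)=\{4\}$, so $\cysp(1_7)\subseteq\{4\}$ comes for free and only the representation over $\mathbb{Z}/4\mathbb{Z}$ needs to be exhibited (which the paper does by observing that $\Id\cup A$ must be the subgroup $\{0,2\}$, yielding the same $A=\{2\}$, $B=\{1,3\}$ as you). You instead re-derive the exclusion from scratch inside the group setting: the forbidden cycles $aaa$ and $baa$ force $A+A\subseteq\{0\}$, hence $A=\{n/2\}$ with $n$ even, and then the forbidden cycle $bbb$ kills every even $n\geq 6$ via $1+1=2\in B$. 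This buys a self-contained argument that does not lean on the full spectrum computation --- and it is closer in spirit to how the paper treats the harder algebras --- at the cost of a slightly longer write-up. One small bookkeeping point: the mandatory cycle $abb$ also carries the Peircean transform $b\leq a;b$ and the forbidden cycle $baa$ the transform $a\leq a;b$, so the full translation includes the condition $A+B=B$, which your list omits and your existence check does not verify; it does hold, since $\{2\}+\{1,3\}=\{3,1\}=B$, so nothing breaks, but it should be stated.
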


It is known that the spectrum of $1_7$ is $\{4\}$. Thus it suffices to exhibit a representation over $\mathbb{Z}/4\mathbb{Z}$.

Since $Id \cup A$ is a subgroup, we must have $A = \{2\}$. This forces $B = \{1,3\}$. Then $B+B = Id \cup A$ and $A+B = B$, as required.

\section{Relation algebra $2_7$}
\begin{theorem}\label{thm:2_7}
    Relation algebra $2_7$ is representable over $\mathbb{Z}/n\mathbb{Z}$ for even $n\geq 6$, and $\cysp (2_7) = \{2k : k\geq 3\}$.
\end{theorem}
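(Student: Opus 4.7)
The plan is to extract the composition table from Table~\ref{tab:cycles} and then separate the claim into a structural (necessity) direction and a construction (sufficiency) direction.  From the mandatory cycles $aaa, abb$ and the forbidden cycles $bbb, baa$, the compositions must be $a\cdot a = 1' + a$, $a\cdot b = b$, $b\cdot b = 1' + a$.  A cyclic representation on $\mathbb{Z}/n\mathbb{Z}$ corresponds to a symmetric partition $\mathbb{Z}/n\mathbb{Z}\setminus\{0\} = A \sqcup B$ translating the above compositions to
\[
A+A = \{0\}\cup A, \qquad A+B = B, \qquad B+B = \{0\}\cup A.
\]

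For necessity, the key observation is to set $H := A \cup \{0\}$.  Since $A = -A$ and $A+A \subseteq H$, the set $H$ is closed under inversion and addition, hence a subgroup of $\mathbb{Z}/n\mathbb{Z}$.  The relation $A+B=B$ then forces $B$ to be a union of $H$-cosets.  Next, $B+B \subseteq H$ implies that every coset of $H$ making up $B$ squares to the trivial coset in the quotient $(\mathbb{Z}/n\mathbb{Z})/H$; since a cyclic group contains at most one element of order $2$, the set $B$ must consist of a single coset and so $n = 2|H|$.  Finally, the equation $A+A = H$ rules out $|H|\in\{1,2\}$ (when $|H|=2$ the element of $A$ has order $2$, giving $A+A = \{0\} \neq H$), leaving $|H|\geq 3$ and hence $n \geq 6$ even.

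For sufficiency, given $n = 2h$ with $h \geq 3$, take $H = \{0, 2, 4, \ldots, 2h-2\}$, $A = H\setminus\{0\}$, and $B$ the set of odd residues modulo $2h$.  Then $A+B=B$ and $B+B = H$ are immediate from parity and the index-$2$ coset structure, while $A+A = H$ is also straightforward: for any nonzero $m \in H$ with $m\neq 2$ one has $m = 2 + (m-2)$ with both summands in $A$, and the single leftover case $m=2$ is handled by $2 = 4 + (2h-2)\pmod{2h}$, which requires exactly $h\geq 3$.

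The main obstacle is the structural step recognizing that $A\cup\{0\}$ is forced to be a subgroup; once this is in place, the quotient-group squeeze that eliminates all odd $n$ and all $n=2,4$ is short, and the construction on even $n \geq 6$ is routine.
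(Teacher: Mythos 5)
Your proposal is correct, and the sufficiency half (even/odd residues) is exactly the paper's construction. The necessity half, however, takes a genuinely different route. The paper also begins by observing that $A\cup\{0\}$ is a subgroup $H$, but then only targets odd $n$: since $n$ is odd, $|H|\leq n/3$, so $|B|\geq 2n/3 > n/2$, and a separate pigeonhole lemma (Lemma~\ref{lem:sum}: a sum-free subset of $\mathbb{Z}/n\mathbb{Z}$ for odd $n$ has at most $n/2$ elements) contradicts the sum-freeness of $B$ forced by $B+B=\{0\}\cup A$. You instead push the coset structure further: $A+B=B$ makes $B$ a union of $H$-cosets, $B+B\subseteq H$ forces each such coset to have order $2$ in the cyclic quotient, and uniqueness of the order-$2$ element gives $[\mathbb{Z}/n\mathbb{Z}:H]=2$, after which $A+A=H$ excludes $|H|\leq 2$. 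Your argument buys two things: it avoids the auxiliary sum-free lemma entirely, and it explicitly eliminates the even cases $n=2,4$, which the paper's written proof does not address (it leans implicitly on the known spectrum $\{n\geq 6\}$). The paper's counting argument is arguably more portable to settings where one only has cardinality information, but for this theorem your structural argument is cleaner and more self-contained.
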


\begin{lemma}\label{lem:sum}
    Let $n$ be odd. Let $S$ be a sum-free subset of $\mathbb{Z}/n\mathbb{Z}$. Then $S$ cannot contain more than $n/2$ elements.
\end{lemma}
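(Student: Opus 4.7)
The plan is to use a translation argument. Let $S \subseteq \mathbb{Z}/n\mathbb{Z}$ be sum-free, and assume $S$ is nonempty (otherwise the claim is trivial). Fix any element $s \in S$ and consider the translate $s + S = \{s + t : t \in S\}$. Since translation by $s$ is a bijection of $\mathbb{Z}/n\mathbb{Z}$, we have $|s + S| = |S|$.

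The key observation is that $S$ and $s + S$ must be disjoint. Indeed, suppose $x \in S \cap (s+S)$. Then $x = s + t$ for some $t \in S$, but both $s$ and $t$ lie in $S$ and their sum $x$ also lies in $S$, contradicting sum-freeness. Hence $S \sqcup (s+S)$ is a disjoint union of subsets of $\mathbb{Z}/n\mathbb{Z}$, giving
\[
2|S| \;=\; |S| + |s+S| \;\leq\; n.
\]

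Since $n$ is odd and $|S|$ is a nonnegative integer, the inequality $2|S| \leq n$ forces $2|S| \leq n-1$, i.e., $|S| \leq (n-1)/2 < n/2$, completing the proof. The argument is essentially a one-liner, so there is no real obstacle; the only mild subtlety is that the hypothesis that $n$ is odd is used at the very last step to convert $|S| \leq n/2$ into the strict inequality $|S| < n/2$ (relevant for later parity considerations in the proof of Theorem~\ref{thm:2_7}).
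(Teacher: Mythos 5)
Your proof is correct, but it takes a genuinely different route from the paper. The paper argues by pigeonhole: it assumes $|S|>n/2$, observes that the doubles $x+x$ for $x\in S$ must all land outside $S$ (by sum-freeness), and since there are fewer than $n/2$ holes outside $S$, two doubles collide, i.e.\ $x+x=y+y$ for distinct $x,y$ --- contradicting the fact that doubling is a bijection on $\mathbb{Z}/n\mathbb{Z}$ when $n$ is odd. You instead use the classical translation argument: $S$ and $s+S$ are disjoint sets of equal size, so $2|S|\leq n$. Your argument is shorter and strictly more general --- it yields $|S|\leq n/2$ for \emph{every} $n$ (indeed for any finite group), with the oddness of $n$ used only to upgrade to the strict bound $|S|\leq (n-1)/2$; note that since $n/2$ is not an integer for odd $n$, even that last step is not strictly needed for the statement as written. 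The paper's argument, by contrast, uses oddness essentially (the doubling map fails to be injective for even $n$). For the application in Theorem~\ref{thm:2_7}, where one only needs that a set of size at least $2n/3$ is not sum-free, either proof suffices.
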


\begin{proof}
    Suppose for contradiction that $S\subseteq \mathbb{Z}/n\mathbb{Z}$ is sum-free and contains more than $n/2$ elements.  Consider pigeonholes labelled 0 through $n-1$ (the elements of $\mathbb{Z}/n\mathbb{Z}$).  For each element in $S$, place a pigeon in the hole labelled with that element.  This leaves fewer than half the holes unoccupied. 
    
    Now, for each $x\in S$, place a pigeon in the hole labelled $x+x\pmod{n}$.  Since $S$ is sum-free, no pigeon is placed in a hole occupied by a pigeon from the previous paragraph, so some hole gets two ``$x+x$'' pigeons, so there are distinct $x,y\in\mathbb{Z}/n\mathbb{Z}$ with $x+x=y+y$. But  $n$ is odd, so the map $a \mapsto a+a$ is a bijection on $\mathbb{Z}/n\mathbb{Z}$, which is a contradiction. 
\end{proof}

\begin{proof}[Proof of Theorem \ref{thm:2_7}]
    We need to partition $\mathbb{Z}/n\mathbb{Z}$ Into $\Id$, $A$, and $B$ such that 

\begin{enumerate}
    \item $A+A=A\cup \Id$
    \item $B+B=A\cup \Id$
    \item $A+B=B$
\end{enumerate}

If $n$ is even, then we can let $A =\{ 2, 4, \ldots, n-2\}$ and $B=\{1,3,5,\ldots, n-1\}$, i.e., the even and odd residues. (We will abuse terminology and treat the residues modulo $n$  as ``regular integers,'' not equivalence classes.) 

To see that if $n$ is odd, a representation over $\mathbb{Z}/n\mathbb{Z}$ does not exist, consider $n=2k+1$. We have that $A\cup \Id$ is a subgroup of $\mathbb{Z}/n\mathbb{Z}$, and since 2 does not divide $n$, we conclude that $|A\cup \Id|\leq n/3$. Thus $|B|\geq 2n/3$, so by Lemma \ref{lem:sum}, $B$ is not sum-free, a contradiction. 
\end{proof}

\section{Relation algebra $3_7$}
\begin{theorem}
    Relation algebra $3_7$ is representable over $\mathbb{Z}/n\mathbb{Z}$ for even $n\geq 6$, and $\cysp (3_7) = \{2k : k\geq 3\}$.
\end{theorem}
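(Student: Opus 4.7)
The plan is to exploit the fact that both $aaa$ and $baa$ are forbidden cycles for $3_7$, which jointly force $(A+A)\cap (A\cup B) = \emptyset$, hence $A+A \subseteq \{0\}$. This means every element of $A$ has order dividing $2$ in $\mathbb{Z}/n\mathbb{Z}$, and since $0 \notin A$ every element must have order exactly $2$. The only candidate is $n/2$, which exists only when $n$ is even. So for $n$ odd a faithful $3$-atom representation over $\mathbb{Z}/n\mathbb{Z}$ is already impossible, and for $n$ even we are forced to take $A = \{n/2\}$ and $B = \mathbb{Z}/n\mathbb{Z} \setminus \{0, n/2\}$, leaving no freedom in the construction.

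It then remains to verify that these sets satisfy the remaining composition conditions for even $n \geq 6$, and to handle the small even cases. The forbidden cycles are automatic since $A+A = \{0\}$. The mandatory cycle $abb$ requires $B \subseteq A+B$; this follows immediately because $b \mapsto n/2 + b$ is a fixed-point-free involution on $B$, giving $A+B = B$. The real content is showing $A \cup B \subseteq B+B$ (which covers both $abb$ and $bbb$), and I would do this by a simple counting argument: for any target $t \in \mathbb{Z}/n\mathbb{Z}$, a pair $(x, t-x)$ fails to lie in $B \times B$ exactly when $x \in \{0,\, n/2,\, t,\, t-n/2\}$, a set of at most four elements; so for $n \geq 6$ there are at least $n-4 \geq 2$ valid decompositions, yielding $t \in B+B$.

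The last step is to rule out the small even cases. For $n = 2$ the set $B$ is empty. For $n = 4$ we have $B = \{1, 3\}$ and a direct computation gives $B+B = \{0, 2\}$, so $B \not\subseteq B+B$, eliminating $n=4$. The proof is essentially self-contained and I do not expect a real obstacle; the key conceptual move is spotting up front that the two forbidden cycles together crush $A+A$ into $\{0\}$, which is what makes the structure so rigid and automatically produces the odd-$n$ and $n=4$ obstructions with no further work.
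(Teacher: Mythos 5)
Your proof is correct, and the construction you arrive at is necessarily the same as the paper's ($A=\{n/2\}$, $B$ the remaining $n-2$ nonzero residues), since as you observe the forbidden cycles $aaa$ and $baa$ crush $A+A$ into $\{0\}$ and leave no freedom. The difference is in how the two directions are handled. The paper dispatches the ``only if'' direction by citing the known fact that $\mathrm{Spec}(3_7)=\{2k:k\geq 3\}$ (from Andr\'eka--Maddux), so it only needs to exhibit representations for even $n\geq 6$, and it then asserts without verification that the construction ``suffices.'' You instead prove both directions from scratch: the order-$2$ argument rules out all odd $n$ (and forces the construction for even $n$), and the explicit computations at $n=2,4$ rule out the small cases, so you never need the spectrum as an input; you also supply the verification the paper omits, with the clean counting bound $|\{0,n/2,t,t-n/2\}|\leq 4 < n$ giving $A\cup B\subseteq B+B$, and the translation-by-$n/2$ argument giving $A+B=B$ exactly. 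One further point in your favor: the composition conditions you derive ($A+A=\Id$, $B+B=\Id\cup A\cup B$, $A+B=B$) are the ones actually consistent with the cycle table for $3_7$ and with the construction; the three conditions displayed in the paper's proof have the roles of $A$ and $B$ scrambled and are not satisfied by the paper's own partition, so your self-contained bookkeeping is the more reliable account. The only cost of your approach is length; what it buys is independence from the external spectrum result and a complete verification.
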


\begin{proof}
    Since the spectrum of $3_7$ is $\{2k : k\geq 3\}$, it suffices to exhibit cyclic representations for even $n\geq 6$. 

    We need to partition $\mathbb{Z}/n\mathbb{Z}$ into $\Id$, $A$, and $B$ such that 

\begin{enumerate}
    \item $A+A=A\cup \Id$
    \item $B+B= \Id$
    \item $A+B=A\cup B$
\end{enumerate}

The second condition tells us that  $B$ consists of a single self-inverse element. Setting $B=\{n/2\}$ and $A = \{1,2,\ldots, n-1\}\setminus\{n/2\}$ suffices.

\end{proof}

\section{Relation algebra $4_7$}
\begin{theorem}
    Relation algebra $4_7$ is representable over $\mathbb{Z}/n\mathbb{Z}$ for composite $n>8$ such that $n$ is not equal to 2 times a prime, and $\cysp (4_7) = \{ n>8 : \ n\text{ is composite, } n \text{ is not twice a prime}\}$. 
\end{theorem}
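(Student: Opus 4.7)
The mandatory cycles $aaa$, $bbb$, $abb$ together with the forbidden cycle $baa$ translate into three partition conditions on $\mathbb{Z}/n\mathbb{Z} = \Id \cup A \cup B$: namely $A + A = \Id \cup A$, $B + B = \mathbb{Z}/n\mathbb{Z}$, and $A + B = B$. The key structural observation is that the first condition, combined with $A = -A$ (the atom $a$ is symmetric) and $0 \in \{0\} \cup A$, forces $H := \{0\} \cup A$ to be a subgroup of $\mathbb{Z}/n\mathbb{Z}$. Setting $d = |H|$ and $m = n/d$, the representability question collapses to finding a suitable factorization $n = dm$.

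For sufficiency, given $n = dm$ with $d, m \geq 3$, I would take $H$ to be the unique subgroup of order $d$, $A = H \setminus \{0\}$, and $B = \mathbb{Z}/n\mathbb{Z} \setminus H$. Condition (1) reduces to showing every $h \in H$ can be written as $a_1 + (h - a_1)$ with $a_1 \in H \setminus \{0, h\}$, which uses only $|H| \geq 3$. Condition (3) is immediate since $B$ is a union of nontrivial cosets of $H$ and $A \subseteq H$. Condition (2) passes to the quotient $\mathbb{Z}/n\mathbb{Z}/H \cong \mathbb{Z}/m\mathbb{Z}$: the image of $B$ is all nonzero residues, and every element of $\mathbb{Z}/m\mathbb{Z}$ is a sum of two nonzero residues precisely when $m \geq 3$ (for any $c$, pick any $x \in \{1,\dots,m-1\} \setminus \{c\}$, nonempty when $m \geq 3$).

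For necessity, $A \neq \emptyset$ gives $d \geq 2$ and $B \neq \emptyset$ gives $m \geq 2$. If $d = 2$, then $A = \{n/2\}$ and $A + A = \{0\} \subsetneq H$, contradicting (1). If $m = 2$, then $B$ is a single nontrivial coset $H + g$, so $B + B = H + 2g = H \subsetneq \mathbb{Z}/n\mathbb{Z}$, contradicting (2). Hence $d, m \geq 3$.

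Finally, I would match the condition ``$n = dm$ with $d, m \geq 3$'' to the claimed arithmetic set. Primes admit no such factorization; each $n = 2p$ with $p$ prime factors only as $1 \cdot n$ or $2 \cdot p$, both with a factor below $3$; the only remaining composite lacking such a factorization is $n = 8 = 2 \cdot 4$, which the hypothesis $n > 8$ excludes. Conversely, for $n > 8$ composite with $n \neq 2p$: if $n$ is odd, the smallest prime factor $p \geq 3$ gives $n = p \cdot (n/p)$ with $n/p \geq p \geq 3$; if $n$ is even, then $n/2 \geq 5$ is composite, and a short case split on whether $4 \mid n$ (giving $d = 4$ with $m = n/4 \geq 3$) or $4 \nmid n$ (so $n/2$ is odd composite with an odd prime factor $q \geq 3$, giving $d = q$) produces the required factorization. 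The conceptual core is the subgroup observation; the main (mild) obstacle is just this final arithmetic bookkeeping.
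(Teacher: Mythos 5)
Your proposal is correct and follows essentially the same route as the paper: the observation that $\{0\}\cup A$ must be a subgroup $H$, the construction $A = H\setminus\{0\}$, $B = (\mathbb{Z}/n\mathbb{Z})\setminus H$, the passage to the quotient to verify $B+B$, and the exclusion of $|H|=2$ and index $2$. The only difference is cosmetic: you spell out the final arithmetic translation of ``$n=dm$ with $d,m\geq 3$'' into ``composite, $>8$, not twice a prime,'' which the paper leaves implicit.
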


\begin{proof}
    First, we establish the claim that $A\cup I$ must be a subgroup of $\mathbb{Z}/n\mathbb{Z}$. Since $aaa$ is mandatory, $A\subseteq A+A$.  Since $a$ and $b$ are the only diversity atoms and $baa$ is forbidden, $A+A = A \cup I$, so $A\cup I$ is closed under addition. Since it is also closed under subtraction, $A\cup I$ is a subgroup.

    Next, we show that a necessary condition for representability over $\mathbb{Z}/n\mathbb{Z}$ is that $n$ have a divisor $k>2$ such that $n/k>2$, hence a subgroup $H$ of order greater than two and index greater than two. First, note that if $n$ is prime, we have no proper nontrivial subgroups. 
    
    So suppose for contradiction that $4_7$ is representable over $\mathbb{Z}/n\mathbb{Z}$, but the only divisors of $n$ are 1, 2, $n/2$, and $n$.  If $|H|=2$ and $A = H\setminus\{0\}$, then $n$ is even, $A = \{n/2\}$, and $A+A=I$, a contradiction.  If $|H| = n/2$, then again $n$ is even and $H$ consists of all the even numbers less than $n$. This means that $B$ consists of all the odds. But that makes $B$ sum-free, which contradicts the fact that $bbb$ is mandatory.    

    Now we show that $n$ having a divisor $k>2$ such that $n/k>2$ is sufficent for representability over $\mathbb{Z}/n\mathbb{Z}$. Let $H$ be the unique subgroup of order $k$, let $A = H\setminus\{0\}$, and let $B=(\mathbb{Z}/n\mathbb{Z})\setminus H$. The fact that $A+A=I\cup A$ is immediate.  To see that $A+B=B$, let $x\in B$. Choose any $y\in A$. Then $z=x-y$ is clearly in $B$, so $y+z=x$ and $y$ and $z$ witness that $x\in A+B$. $A$ is disjoint from $A+B$ because otherwise $A\cup I$  is not closed under the group operations. 

    To see that $B+B=I\cup A \cup B$, let $x\in A$, and consider the various cosets of $H$ in $\mathbb{Z}/n\mathbb{Z}$. Because $H$ is the identity coset, there exist $y,z\notin H$ such that $x=y+z$. Note that $y,z\in B$.  

    Now let $x\in B$. Suppose $x$ is some coset $gH$ (which cannot be $H$). Because $H$ has index greater than two, we can find $h,k\notin H$ such that $gH = hH \oplus kH$, where $\oplus$ is the operation in the quotient group. (We are using juxtaposition for cosets, to avoid confusion with set-wise addition.) Therefore there are $h',k'\in B$ such that $x=h'+k'$, as desired. 
    
\end{proof}
Note that we used the fact that the underlying group was cyclic only to establish the existence of a subgroup of the desired order and index.  Hence we have the following corollary.
\begin{corollary}
    Let $G$ be a finite abelian group.  Then $4_7$ is representable over $G$ if and only if $G$ has a subgroup $H$ of order greater than 2 and of index greater than 2. 
\end{corollary}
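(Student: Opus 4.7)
The plan is to adapt the proof of the theorem almost verbatim, since (as the authors already noted in the remark preceding the corollary) the cyclic hypothesis was used only to guarantee the existence and orders of subgroups. So I would simply rerun both directions with a general finite abelian group $G$ in place of $\mathbb{Z}/n\mathbb{Z}$, using subgroup $H$ in place of the unique cyclic subgroup of a given order.

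For the necessity direction, assume $4_7$ is representable over $G$ via a partition $\{\Id, A, B\}$. The same reasoning as in the theorem shows $H := A \cup \Id$ is a subgroup of $G$: $A \subseteq A+A$ because $aaa$ is mandatory; $A+A \subseteq A \cup \Id$ because $baa$ is forbidden (so $H$ is closed under addition); and $H$ is closed under negation because $a$ is symmetric. Now I would rule out $|H| \le 2$ and $[G:H] \le 2$ by the same case analysis as before. If $|H| = 2$, then $A = \{h\}$ with $2h = 0$, so $A+A = \{0\} \neq A \cup \Id$. If $[G:H] = 2$, then $B$ sits inside the unique nontrivial coset of $H$, so $B+B \subseteq H$, contradicting that $bbb$ is mandatory.

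For the sufficiency direction, given $H \le G$ with $|H| > 2$ and $[G:H] > 2$, I would set $A = H \setminus \{0\}$ and $B = G \setminus H$ and verify the three cycle equations. The identities $A+A = \Id \cup A$ and $A+B = B$ are immediate from $H$ being a subgroup (using $|H| > 2$ to ensure $A \subseteq A+A$). The containment $B+B = \Id \cup A \cup B$ reduces, via cosets, to the observation that in any group of order greater than $2$, every element can be written as a sum of two non-identity elements.

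The main obstacle — and the step that genuinely uses $[G:H] > 2$ — is showing $B \subseteq B+B$. Given $x \in B$ with coset $xH \neq H$ in $G/H$, I would choose $hH \in G/H$ with $hH \neq H$ and $hH \neq xH$, which is possible because $|G/H| \ge 3$, and then set $kH = xH - hH$. Both $hH$ and $kH$ are nontrivial cosets, so picking representatives $h, k \in B$ with $h + k \equiv x \pmod{H}$ and adjusting one of them within its coset (using the freedom in choice of representative) gives $h' + k' = x$ with $h', k' \in B$. This is exactly the step that fails when $[G:H] = 2$, which also explains why the hypothesis on $H$ is sharp.
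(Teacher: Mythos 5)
Your proposal is correct and matches the paper's intent exactly: the paper proves the corollary simply by observing that the theorem's argument used cyclicity only to produce a subgroup of the required order and index, and your rerun of both directions (including the coset argument for $B\subseteq B+B$ using index greater than $2$) is precisely that argument transplanted to a general finite abelian group. Your necessity direction is in fact stated a bit more cleanly than the paper's divisor-based case analysis, but it is the same underlying reasoning.
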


\section{Relation algebra $5_7$}
\begin{theorem}
    Relation algebra $5_7$ is representable over $\mathbb{Z}/5\mathbb{Z}$, and $\cysp (5_7) = \{5\}$.
\end{theorem}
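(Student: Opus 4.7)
The plan is to first exhibit a representation over $\mathbb{Z}/5\mathbb{Z}$ and then show no other cyclic group works. As a preliminary step, I would translate the cycle structure of $5_7$ (mandatory $abb$, $baa$; forbidden $aaa$, $bbb$) into sumset conditions on the color classes $A$ and $B$ of the diversity atoms $a$ and $b$; these sets are symmetric ($A = -A$, $B = -B$) and partition $\mathbb{Z}/n\mathbb{Z} \setminus \{0\}$, and the mandatory/forbidden cycles translate directly to
\[
A+A = \{0\} \cup B, \qquad B+B = \{0\} \cup A, \qquad A+B = A \cup B.
\]
For $n = 5$, the assignment $A = \{1,4\}$, $B = \{2,3\}$ verifies all three conditions immediately.

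For the negative direction, suppose $\mathbb{Z}/n\mathbb{Z}$ gives such a partition. Since $5_7$ admits the atom-swap symmetry $a \leftrightarrow b$, I may assume $1 \in A$. A cascade of forced assignments then follows by repeatedly applying the sumset equations together with $A = -A$: $2 = 1+1 \in A+A \subseteq \{0\} \cup B$ gives $2 \in B$; $4 = 2+2 \in B+B \subseteq \{0\} \cup A$ gives $4 \in A$; $3 = 4 + (-1) \in A+A$ gives $3 \in B$; $6 = 3+3 \in B+B$ gives $6 \in A$; and $5 = 6 + (-1) \in A+A$ gives $5 \in B$ (each step valid provided the relevant element is nonzero mod $n$). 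Now observe that $10 = 4+6 \in A+A \subseteq \{0\} \cup B$, while simultaneously $10 = 5+5 \in B+B \subseteq \{0\} \cup A$; since $A \cap B = \emptyset$, this forces $10 \equiv 0 \pmod n$, i.e., $n \mid 10$.

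Combined with the small-$n$ exceptions where the chain breaks (when some element among $1,\dots,6$ vanishes mod $n$), this narrows the candidates to $n \in \{2,3,4,5,6,10\}$. All but $n=5$ yield to direct inspection: for instance, $n=6$ fails because $2 \in B$ forces $4 = -2 \in B$ by symmetry, contradicting $4 = 2+2 \in B+B \subseteq \{0\} \cup A$; and for $n=10$ the forced chain yields $A = \{1,4,6,9\}$ and $B = \{2,3,5,7,8\}$, whereupon $2+3 = 5 \in B$ violates the sum-freeness of $B$ mandated by $bbb$ being forbidden. The main obstacle here is really just bookkeeping: being careful about which small-$n$ cases each forced deduction leaves unaddressed, so that the finite list truncates cleanly to $\{5\}$.
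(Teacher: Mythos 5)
Your proof is correct, but the impossibility half runs along a genuinely different track from the paper's. The paper disposes of all $n\geq 6$ at once (for \emph{arbitrary} representations, not just cyclic ones) by invoking the Ramsey number $R(3,3)=6$: since $aaa$ and $bbb$ are both forbidden, a representation is a triangle-free $2$-coloring of a complete graph, impossible on $6$ or more vertices; the remaining small cases are folklore, and the whole spectrum computation $\mathrm{Spec}(5_7)=\{5\}$ is simply quoted. Your argument instead exploits the cyclic group structure directly: starting from $1\in A$ (justified by the $a\leftrightarrow b$ automorphism of $5_7$), the equations $A+A=\{0\}\cup B$, $B+B=\{0\}\cup A$ together with $A=-A$ force the colors of $2,3,4,5,6$ in turn, and the clash between $10=4+6\in A+A$ and $10=5+5\in B+B$ yields $n\mid 10$; the survivors $n\in\{2,3,4,6,10\}$ all die by short direct checks (your $n=6$ and $n=10$ eliminations are both right, and $n=2,3,4$ fail for the trivial reasons you gesture at --- an empty color class or an unwitnessed mandatory cycle $abb$). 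What each approach buys: the paper's Ramsey argument is shorter and proves the stronger statement about the full spectrum, which is what the theorem's appeal to folklore rests on; yours is weaker in scope (it only rules out cyclic representations, which is all $\cysp$ requires) but is entirely self-contained, requiring neither Ramsey's theorem nor any quoted folklore. Your translation of the cycle structure into the three sumset equations and the verification of $A=\{1,4\}$, $B=\{2,3\}$ over $\mathbb{Z}/5\mathbb{Z}$ agree with the paper.
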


\begin{proof}
    The fact that $\mathrm{Spec}(5_7) = \{5\}$ is well-known folklore. Note that the 1-cycles are forbidden in both colors. The connection to the Ramsey number $R(3,3)=6$, which implies that any edge-coloring in two colors on a complete graph on six or more vertices must contain a monochromatic triangle, hence forbidding a representation, has been noted often.  

    Relation algebra $5_7$ is representable over $\mathbb{Z}/5\mathbb{Z}$ by setting $A=\{1,4\}$ and $B=\{2,3\}$. 
\end{proof}

\section{Relation algebra $6_7$}
\begin{theorem}
    Relation algebra $6_7$ is representable over $\mathbb{Z}/n\mathbb{Z}$ for $n=8$, $n \geq 11$ , and $\cysp (6_7) = \{8\}\cup\{n : n\geq 11\}$.
\end{theorem}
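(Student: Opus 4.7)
The plan is to reduce cyclic representability to a combinatorial partition condition, handle the excluded small $n$ by case analysis, and exhibit explicit constructions for every remaining $n$.

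From the composition table of $6_7$ (giving $a;a = 1$, $b;b = 1' + a$, $a;b = a + b$), a cyclic representation over $\mathbb{Z}/n\mathbb{Z}$ is exactly a partition $\mathbb{Z}/n\mathbb{Z} = \{0\} \sqcup A \sqcup B$ into symmetric sets satisfying (i) $A + A = \mathbb{Z}/n\mathbb{Z}$, (ii) $B + B = \{0\} \cup A$, and (iii) $A + B = A \cup B$. Condition (ii) forces $B$ to be sum-free and gives the counting identity $|B + B| = n - |B|$.

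For $n \in \{1, \ldots, 7, 9, 10\}$ we rule out representations by finite case analysis. The counting identity combined with Lemma~\ref{lem:sum} (for odd $n$) pins $|B|$ to a very short list of candidate sizes; for each admissible size we enumerate the symmetric sum-free sets $B$ with $B + B = \mathbb{Z}/n\mathbb{Z} \setminus B$ and check that each remaining partition violates (i) or (iii). For example, when $n = 7$ the sum-free bound forces $|B| \leq 2$, giving $|B + B| \leq 3 < 5 = n - |B|$, an immediate contradiction; the cases $n = 9, 10$ are similar but require checking a handful of explicit candidate $B$'s.

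For $n = 8$, the partition $B = \{3, 4, 5\}$, $A = \{1, 2, 6, 7\}$ satisfies all three conditions by direct verification. For $n \geq 11$ we split by $n \bmod 3$. When $n \equiv 2 \pmod 3$, the middle-third interval $B = \{c, c+1, \ldots, n-c\}$ with $c = (n+1)/3$ works: the identity $3c = n+1$ makes $B + B = \mathbb{Z}/n\mathbb{Z} \setminus B$ by interval arithmetic; the induced $A = \{\pm 1, \ldots, \pm(c-1)\}$ contains $\{\pm 1, \pm 2\}$ as soon as $n \geq 8$, which together with its interval structure forces $A + A = \mathbb{Z}/n\mathbb{Z}$; and $A + B = \mathbb{Z}/n\mathbb{Z} \setminus \{0\}$ by the same kind of arithmetic. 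When $n \equiv 0$ or $1 \pmod 3$ the pure interval fails, and we use targeted modifications (for instance $B = \{\pm 2, \pm 3\}$ for $n = 12$, $B = \{\pm 1, \pm 5\}$ for $n = 13$, $B = \{\pm 2, \pm 3, \pm 7\}$ for $n = 18$, and so on) together with a systematic family covering all large $n$ in those residue classes.

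The hard part is condition (iii). Even when (ii) holds, $A + B = A \cup B$ can fail when $B$ has a nontrivial additive stabilizer $\{d \in \mathbb{Z}/n\mathbb{Z} : B + d = B\}$: any $d \in A$ lying in the stabilizer satisfies $d - B \subseteq B$, hence $d \notin A + B$. The interval construction automatically has trivial stabilizer, but the modified constructions for the other residues require this check explicitly; designing perturbations of the interval that preserve both the covering identity~(ii) and the trivial-stabilizer property for every $n \geq 11$ is the delicate step of the proof.
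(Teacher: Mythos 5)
Your reduction to the partition conditions is correct, your $n=8$ witness checks out, and your treatment of $n\equiv 2\pmod 3$ (the middle-third set $B=[(n+1)/3,(2n-1)/3]$) is exactly the construction the paper uses; the stabilizer observation about condition (iii) is also a correct, if partial, analysis of how $A+B=A\cup B$ can fail. But there is a genuine gap: for the residue classes $n\equiv 0$ and $n\equiv 1\pmod 3$ you give only finitely many ad hoc examples ($n=12,13,18$) and then write ``and so on \dots together with a systematic family covering all large $n$ in those residue classes.'' That systematic family is precisely the nontrivial content of the theorem for infinitely many $n$, and you never exhibit it or verify it; as written, representability is unproved for all but finitely many $n$ in those two classes. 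The paper supplies explicit families of symmetric complete sum-free sets: for $n=3k+1$, $B=\{k\}\cup[k+2,2k-1]\cup\{2k+1\}$; for $n=6k$, $B=[k,2k-1]\cup[4k+1,5k]$; and for $n=6k+3$, $B=[k,2k-1]\cup[4k+4,5k+3]$. Any complete proof must either quote or verify some such family.

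A second, smaller issue: your exclusion of $n=9,10$ is described as ``checking a handful of explicit candidate $B$'s,'' without saying what pins the candidates down. These two values are exactly what separates $\cysp(6_7)$ from $\mathrm{Spec}(6_7)=\{n\geq 8\}$, so the case analysis has to be made explicit. The paper's route for $n=9$ is that $|A|$ and $|B|$ are even (symmetric sets avoiding $0$ in a group of odd order), neither can have size $2$, hence $|A|=|B|=4$, and then a short enumeration shows no symmetric complete sum-free set of size $4$ exists in $\mathbb{Z}/9\mathbb{Z}$; the case $n=10$ (and $n=9$ as a check) is settled by SAT solver. Your direct elimination of $n\leq 7$ is fine but unnecessary once one quotes the known spectrum $\{n\geq 8\}$.
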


\begin{proof}
The representation over $\mathbb{Z}/8\mathbb{Z}$ is well-known and is given by $A=\{2,3,5,6\}$, $B=\{1,4,7\}$. 

      For $n\geq 11$, we need to partition $\mathbb{Z}/n\mathbb{Z}$ into $\Id$, $A$, and $B$ such that 

\begin{enumerate}
    \item $A+A=A\cup B \cup \Id$
    \item $B+B= A\cup \Id$
    \item $A+B=A\cup B$
\end{enumerate}

In this case, $B$ is what is called a ``symmetric complete sum-free set'' in the additive number theory literature.  Many examples are well-known and we quote them without proof.

For the case $n=3k+2$, the construction is quite straightforward. One sets $B$ to be the interval $[k+1, 2k+1]$. So $B$ is the ``middle third.''  See, for example, \cite{Levy}. 

For the case $n=3k+1$, we modify the middle third  construction slightly. One sets $B$ to be the union  $\{k\} \cup [k+2, 2k-1] \cup \{2k+1\}$.   See  \cite{Levy} again. 

We were not able to find a construction in the literature for the case $n=3k$. We give a construction here, but we doubt its originality.  

We split into two cases, $n=6k$ and $n=6k+3$. For $n=6k$, set $B = [k, 2k-1] \cup [4k+1, 5k]$. For $n=6k+3$, set $B = [k, 2k-1] \cup [4k+4, 5k+3]$. It is somewhat tedious but straightforward to verify that these sets work. 

Finally, we need to rule out cyclic representations for $n= 9, 10$. For $n=9$, we have $|A|, |B| \in {2,4,6}$. Neither can have order 2, so we must have $|A|=|B|=4$. A bit of case analysis shows there is no symmetric complete sum-free set of order 4 in $\mathbb{Z}/9\mathbb{Z}$. In addition, the non-existence of cyclic representations for $n= 9, 10$ was confirmed via SAT solver.

\end{proof}

\section{Relation algebra $7_7$}
\begin{theorem}
    Relation algebra $7_7$ is representable over $\mathbb{Z}/n\mathbb{Z}$ for  $n\geq 12$, and $\cysp (7_7) = \{n : n\geq 12\}$.
\end{theorem}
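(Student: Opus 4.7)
The plan is as follows. Since all four diversity cycles $aaa$, $bbb$, $abb$, $baa$ are mandatory for $7_7$, a cyclic representation over $\mathbb{Z}/n\mathbb{Z}$ amounts to a symmetric partition $\mathbb{Z}/n\mathbb{Z}\setminus\{0\} = A\sqcup B$ (with $A=-A$ and $B=-B$) satisfying
\[
A+A \;=\; B+B \;=\; \mathbb{Z}/n\mathbb{Z}, \qquad A+B \;=\; \mathbb{Z}/n\mathbb{Z}\setminus\{0\}.
\]
The fact that $0\notin A+B$ is automatic from $-B=B$ and $A\cap B=\emptyset$, so the real work lies in the two completeness conditions $A+A=B+B=\mathbb{Z}/n\mathbb{Z}$ together with $A\cup B\subseteq A+B$. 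I would invoke Andr\'eka--Maddux's determination of $\mathrm{Spec}(7_7)$ to exclude $n\leq 11$ (no representation, cyclic or otherwise, exists for those sizes), leaving only the task of constructing cyclic representations for every $n\geq 12$.

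My first instinct is an interval recipe, $A=\{\pm 1,\ldots,\pm m\}$ with $B$ its complement, but a quick check shows this is infeasible: forcing $A+A=\mathbb{Z}/n\mathbb{Z}$ requires $m\geq (n-1)/4$, while forcing $B+B=\mathbb{Z}/n\mathbb{Z}$ requires $m\leq (n-3)/4$. To break this impasse I would augment $A$ with a handful of extra symmetric pairs $\{\pm t_1,\ldots,\pm t_\ell\}$ placed near $n/2$, which extends the reach of $A+A$ into the central residues at the cost of only $2\ell$ elements of $B$. The thinned $B$ must still satisfy $B+B=\mathbb{Z}/n\mathbb{Z}$---unlike in the $6_7$ proof, $B$ here must \emph{not} be sum-free, since $bbb$ is now mandatory. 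I expect the resulting recipe to split by $n\bmod 6$ (refining the $n=3k,3k+1,3k+2$ split of the $6_7$ proof by parity), with each case's verification reducing to interval arithmetic since sumsets of intervals in $\mathbb{Z}/n\mathbb{Z}$ are again intervals of predictable length.

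The tightest boundary cases---likely $n\in\{12,13,14,15\}$---may be too cramped for a uniform parametric family, and I would dispatch these by hand; for instance $A=\{\pm 1,\pm 2,\pm 5\}$, $B=\{\pm 3,\pm 4,6\}$ handles $n=12$ on direct check, and similar small hand constructions should suffice just above. The main obstacle is the simultaneous completeness of both sumsets, which rules out the asymmetric ``one side sum-free'' paradigm used for the other algebras treated earlier in this paper and forces a delicate partition in which $A$ and $B$ are each additively rich while still being disjoint and mutually cross-reachable. Once a clean family is identified for each residue modulo $6$, the three sumset identities become mechanical to verify.
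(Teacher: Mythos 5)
There is a genuine gap in the lower-bound half of your argument. You propose to invoke Andr\'eka--Maddux's determination of $\mathrm{Spec}(7_7)$ to exclude $n\leq 11$, asserting that no representation, cyclic or otherwise, exists for those sizes. But $\mathrm{Spec}(7_7)=\{n\geq 9\}$: the algebra \emph{is} representable over sets of sizes $9$, $10$, and $11$, just not over the groups $\mathbb{Z}/9\mathbb{Z}$, $\mathbb{Z}/10\mathbb{Z}$, $\mathbb{Z}/11\mathbb{Z}$. The spectrum therefore only disposes of $n\leq 8$, and the nonexistence of cyclic representations for $n=9,10,11$ --- which is precisely where $\cysp(7_7)$ differs from $\mathrm{Spec}(7_7)$, and hence the substantive content of the theorem's lower bound --- requires its own argument. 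The paper settles it by exhaustive SAT-solver search over $9\leq n\leq 33$; you would need either that or a direct case analysis of all symmetric partitions of $\mathbb{Z}/n\mathbb{Z}\setminus\{0\}$ for those three moduli.

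On the constructive side, your reformulation of the three conditions is correct (and the observation that $0\notin A+B$ follows automatically from $-B=B$ and disjointness is right), and your $n=12$ witness $A=\{\pm1,\pm2,\pm5\}$, $B=\{\pm3,\pm4,6\}$ does check out. Your route for $n\geq 12$ diverges genuinely from the paper's: the paper uses a probabilistic union-bound argument (a random symmetric labelling leaves some ``need'' unsatisfied with probability at most $3(n-1)(3/4)^{(n-2)/2}$, which is less than $1$ for $n\geq 34$) together with SAT verification for $12\leq n\leq 33$, whereas you propose explicit interval-plus-perturbation families split by $n\bmod 6$. That plan is plausible and would buy explicit witnesses rather than a nonconstructive existence proof, but as written it is only a plan: no parametric family is actually exhibited or verified for any residue class, so the construction half is also incomplete as it stands.
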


\begin{proof}

 To find a cycle representation, we need to partition $\mathbb{Z}/n\mathbb{Z}$ into $\Id$, $A$, and $B$ such that 

\begin{enumerate}
    \item $A+A=A\cup B \cup \Id$
    \item $B+B= A\cup B \cup\Id$
    \item $A+B=A\cup B$
\end{enumerate}
    First we give a probabilistic argument. For any nonzero $z\in\mathbb{Z}/n\mathbb{Z}$, there are at least $(n-2)/2$ sets $\{x,y\}$ such that $x+y=z$. Each $z$ has four ``needs'', namely $aa$, $ab$, $ba$, and $bb$. For each need $c_1c_2$, there must be some $x$, $y$ with $x$ labelled $c_1$, $y$ labelled $c_2$, and $x+y=z$. Note that because $A+B=B+A$, the $ab$ need is satisfied if and only if the $ba$ need is, so we will treat the needs as being simply $aa$, $ab$, and $bb$. 

    Given $z$ and a need $c_1c_2$, the probability that $x$ and $y$ are not labeled $c_1$ and $c_2$, respectively, is $3/4$. Hence the probability that $z$ does not have need $c_1c_2$ satisfied is at most $(3/4)^{(n-2)/2}$. Therefore the probability that $z$ has \emph{some} need unsatified is bounded above by $3\cdot (3/4)^{(n-2)/2}$ (because there are three needs).  Hence, by the union bound, the probability there there is some $z$ with some unsatisfied need is bounded above by 
    \[
        3(n-1)(3/4)^{(n-2)/2},
    \]
    which quantity we want to be less than 1. Throwing the above expression into Excel tells us that $n\geq 34$ is necessary and sufficient. 

    Now it remains to check all $9 \leq n \leq 33$, which we do using the SAT solver CryptoMiniSAT via the python library \texttt{boolexpr}.  Checking all  $9 \leq n \leq 33$, we see that $7_7$ is representable over $\mathbb{Z}/n\mathbb{Z}$ for $12 \leq n \leq 33$. This completes the proof. 
\end{proof}

\section{Summary}\label{sec:summary}

\begin{table}[H]
    \centering
    \begin{tabular}{r|c|c}
         & Spec & Cyclic Spec\\
     \hline
    $1_7$ & $\{4\}$ & $\{4\}$\\
    $2_7$ & $\{n \geq 6\}$ & $\{2k : k \geq 3\}$\\
    $3_7$ & $\{2k : k \geq 3\}$ & $\{2k : k \geq 3\}$\\
    $4_7$ & $\{ n \geq 9\}$ & $\{n \geq 9\}\setminus\{p, 2p : p \text{ prime} \}$\\
    $5_7$ & $\{5\}$ & $\{5\}$\\
    $6_7$ & $\{n \geq 8\}$ & $\{8\} \cup \{ n \geq 11 \}$\\
    $7_7$ & $\{ n \geq 9\}$ & $\{n \geq 12\}$\\
    
    \end{tabular}
    \caption{Summary of Results}
    \label{tab:summary}
\end{table}

\section*{Acknowledgements}

The first author wishes to thank Peter Jipsen for encouraging him, at the 2007 Logic meeting at Vanderbilt when he was a fresh PhD, to find the smallest representation of $32_{65}$. He is still working on it.

    

\bibliographystyle{plain}
\bibliography{refs}

\end{document}